\newtheorem{thm}{Theorem}[section]
\newtheorem{cor}[thm]{Corollary}
\newtheorem{prop}[thm]{Proposition}
\theoremstyle{definition}
\theoremstyle{remark}
\def\nd{\noindent}
\begin{document}

\author[William Browder]{William Browder}
\address{Department of Mathematics,
Princeton  University, Princeton, NJ 08544, U.S.A.}
\email{Bill Browder <browder@math.princeton.edu>}

\

\title[Spheroidal groups, virtual cohomology and lower dimensional $G$-spaces]
{Spheroidal groups, virtual cohomology and lower dimensional $G$-spaces}

\

\subjclass[2000]{Primary: 20J06, 55R35, 55R20, Secondary: 57M60, 57T99, 14H30, 14F35}

\keywords{spheroidal groups, virtual cohomology, free group actions}

\maketitle

\centerline{{\bf This paper is dedicated to the memory of my beloved friend and colleague}}
\centerline{{\bf  Sam Gitler.}}

\begin{abstract}
A space is defined to be ``$n$-spheroidal''  if it has the homotopy type of an $n$-dimensional CW-complex $X$
with $H_{n}(X; \mathbb{Z})$ not zero and finitely generated. A group $G$ is called ``$n$-spheroidal''  if its classifying space 
$K(G,1)$ is $n$-spheroidal. Examples include fundamental groups of compact manifold $K(G,1)$'s. Moreover,
the class of groups $G$ which are $n$-spheroidal  for some $n$, is closed under products, free products, and group extensions.
If $Y$ is a space with $\pi_{1}(Y)$ $n$-spheroidal, and if $H_{k}(Y;\mathbb{F}_{p})$ is non-zero and finitely generated,
and if $H_{i}(Y;\mathbb{F}_{p}) = 0$ for $i>k$, then $H_{n+k}(\overline{Y};\mathbb{F}_{p}) \neq 0$ for  
$\overline{Y}$ a finite sheeted covering space of $Y$. Hence, dim$(Y)  \geq n+k$. Thus, it follows that if dim$(Y) < n$,
and if $H_{k}(Y;\mathbb{F}_{p}) \neq 0$ and $H_{i}(Y;\mathbb{F}_{p}) = 0$ for $i>k>0$, then $H_{k}(Y;\mathbb{F}_{p})$
is not finitely generated. Similar results follow  for $Y\subset K(G,1)$.
 \end{abstract}

\section{Introduction}
We Êcall Ê Êan Ê Ê$n$-dimensional Ê ÊCW Ê Êcomplex Ê Ê$X$ Ê``$n$-spheroidal'' Ê Êif  the integral homologyÊ $H_n(X)$ Êis Ê
(a) Ênon Êtrivial Ê and Ê(b) Êfinitely Êgenerated, Ê(``weakly Ê$n$-spheroidal'' Ê Êwithout Ê(b)). Ê Note that this implies that $H_n(X)$ is free abelian, being isomorphic to the cycles in $C_n(X)$, and this holds for any finite sheeted covering of $X$. Also every homology class in  $H_n(X)$ is detected by a map of  $X$ into $S^n$, which is the origin of this choice of notation. Similar statements hold Êfor Ê Êchain Êcomplexes, Êand  for Êgroups Êby Êapplying Êthis Êcondition Êto Êthe Êclassifying Êspace Êof Ê Ê$G$.

\medskip
Let $p$ be prime and $\mathbb{F}_{p}$ denote the field with $p$ elements.

\medskip
\begin{thm}Ê ÊLet Ê Ê$X$ Ê Êbe Êa ÊCW Êcomplex Êwith weakly Ê Ê$n$-spheroidal Ê Êfundamental Ê group Êand Êwith Êuniversal Êcovering Ê Êspace Ê$Y$  Ê Êsuch Êthat Ê Ê$H_k(Y;\mathbb{F}_{p})$ Ê Êis Ênon Ê zero Êand Êfinitely Êgenerated Êfor Êsome Êprime Ê Ê$p$, Êand Êthat Ê Ê$H_q(Y;\mathbb{F}_{p}) Ê Ê= Ê Ê0$ Ê for Ê$q Ê Ê> Ê Êk$. Ê ÊThen Ê Êthe Êdimension Êof Ê$Y$  Ê Êis greater Êthan Êor Êequal Êto Ê Ê$n Ê+ Êk$. Ê (For Êexample Êif Êan Ê$n$-spheroidal Êgroup Ê$G$ Êacts Êfreely Êon Ê Ê$M Ê\times ÊR^k$ Ê, Êwhere Ê$M$ Ê Êis Êa Êcompact Êmanifold, then Ê $k$ Êis greater Êthan Êor Êequal Êto Ê Ê$n$). Ê
\end{thm}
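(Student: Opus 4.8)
The plan is to exploit the free action of $G=\pi_{1}(X)$ on $Y$ by means of the Cartan--Leray spectral sequence of a suitable finite cover of $X$, locating there a nonzero class in degree $n+k$. Give every covering of $X$ the CW-structure lifted from $X$, so that $\dim Y=\dim X=\dim\bar X$ for any cover $\bar X$ of $X$; since a nonvanishing $H_{j}(W;\mathbb{F}_{p})$ forces $\dim W\ge j$ for a CW complex $W$ (its cellular chain group in degree $j$ vanishes above the dimension), it suffices to produce a finite cover $\bar X$ of $X$ with $H_{n+k}(\bar X;\mathbb{F}_{p})\neq 0$.

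Set $A=H_{k}(Y;\mathbb{F}_{p})$, a nonzero finite-dimensional $\mathbb{F}_{p}$-vector space by hypothesis. The deck action $G\to\mathrm{GL}(A)$ then has finite image, so $H:=\ker(G\to\mathrm{GL}(A))$ is a finite-index subgroup of $G$ acting trivially on $A$. I would first check that $H$ is again weakly $n$-spheroidal: the cover of an $n$-dimensional model of $K(G,1)$ corresponding to $H$ is an $n$-dimensional $K(H,1)$, and the transfer $H_{n}(G;\mathbb{Q})\to H_{n}(H;\mathbb{Q})$ is injective, since its composite with the projection-induced map is multiplication by $[G:H]$, an automorphism of the nonzero rational space $H_{n}(G;\mathbb{Q})$ (nonzero because $H_{n}(G;\mathbb{Z})$ is free abelian and nonzero). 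Hence $H_{n}(H;\mathbb{Z})$ has positive rank and is free abelian (the $n$-cycles in the top chain group of the $n$-dimensional $K(H,1)$), so $H_{n}(H;\mathbb{F}_{p})\neq 0$; and the cellular chains of the universal cover of $K(H,1)$ give a length-$n$ free $\mathbb{Z}H$-resolution of $\mathbb{Z}$, whence $H_{j}(H;-)=0$ for $j>n$ on all coefficients.

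Now let $\bar X=Y/H$ and run the Cartan--Leray spectral sequence of the free action of $H$ on $Y$,
\begin{equation*}
E^{2}_{p,q}=H_{p}\bigl(H;\,H_{q}(Y;\mathbb{F}_{p})\bigr)\ \Longrightarrow\ H_{p+q}(\bar X;\mathbb{F}_{p}).
\end{equation*}
Since $H$ acts trivially on $A$, universal coefficients over the field $\mathbb{F}_{p}$ give $E^{2}_{n,k}=H_{n}(H;A)\cong H_{n}(H;\mathbb{F}_{p})\otimes_{\mathbb{F}_{p}}A\neq 0$. Every differential out of $E^{r}_{n,k}$ lands in a subquotient of $H_{n-r}\bigl(H;\,H_{k+r-1}(Y;\mathbb{F}_{p})\bigr)=0$, as $k+r-1>k$ and $H_{q}(Y;\mathbb{F}_{p})=0$ for $q>k$; every differential into $E^{r}_{n,k}$ issues from a subquotient of $H_{n+r}\bigl(H;\,H_{k-r+1}(Y;\mathbb{F}_{p})\bigr)=0$, as $n+r>n$. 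Hence $E^{\infty}_{n,k}=E^{2}_{n,k}\neq 0$, so $H_{n+k}(\bar X;\mathbb{F}_{p})\neq 0$, and therefore $\dim Y=\dim\bar X\ge n+k$.

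The step carrying the real content is the non-vanishing of $E^{2}_{n,k}$, and it is precisely here that the passage from $G$ to $H$ is needed: for a nontrivial action $H_{n}(G;A)$ may genuinely vanish --- already $H_{1}(\mathbb{Z};A)=A^{\mathbb{Z}}$ is zero whenever $1$ is not an eigenvalue of the generator acting on the finite-dimensional space $A$ --- and likewise $H_{n+k}(X;\mathbb{F}_{p})$ itself may be zero, which is why the conclusion is naturally stated for a finite cover. The remaining ingredients (availability of the first-quadrant Cartan--Leray spectral sequence for a free action, freeness of $H_{n}(-;\mathbb{Z})$ for a weakly $n$-spheroidal group, and the transfer bound preserving $n$-spheroidality) are routine. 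Finally, the parenthetical example is the case where $M$ is simply connected, hence orientable, $Y=M\times\mathbb{R}^{k}$ and $X=Y/G$: then $H_{m}(Y;\mathbb{F}_{p})=H_{m}(M;\mathbb{F}_{p})\neq 0$ and $H_{q}(Y;\mathbb{F}_{p})=0$ for $q>m$ with $m=\dim M$, so the theorem gives $\dim Y\ge n+m$, i.e.\ $m+k\ge n+m$, hence $k\ge n$.
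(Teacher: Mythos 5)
Your argument is correct and is essentially the paper's own proof: both pass to the finite‑index subgroup $D$ (your $H$) killing the action on $H_k(Y;\mathbb{F}_p)$, note that this subgroup is still weakly $n$-spheroidal with $H_n$ free abelian, and then run the ``upper right‑hand corner'' argument on the spectral sequence of $Y\to Y/D\to K(D,1)$ to get $E^2_{n,k}=E^\infty_{n,k}\neq 0$ and hence $H_{n+k}(Y/D;\mathbb{F}_p)\neq 0$. The only cosmetic difference is that you verify the spheroidality of the finite‑index subgroup via the transfer with $\mathbb{Q}$ coefficients, whereas the paper uses the Lyndon--Hochschild--Serre spectral sequence and the torsion of $\overline{H}_*(K(L,1))$; both are fine.
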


\medskip
This Êimplies Êthat Ê Êfor Êa Êfree Ê Ê$G$ ÊCW Êcomplex Ê Ê$Y$, Ê $G$ Ê Ê$n$-spheroidal Êand Ê$\dim Y Ê< Ên$, Ê Êthe Êtop Êhomology Ê with Ê Ê$\mathbb{F}_{p}$ Ê Êcoefficients Êof Ê Ê$Y$  Ê Êis Ênot Êfinitely Êgenerated Êfor Êany Ê Êprime Ê Ê$p$. 

\medskip
I thank the referee for comments, and particularly for calling my attention to the two papers of N.~Petrosyan which
contain related results \cite[Propsition $1.3$]{p1} and \cite[Theorem $4.2$]{p2} which are proved in similar way. I
thank J.~Koll\'{a}r for drawing my attention to such problems (see Section \ref{sec:results}).

\section{Spheroidal Ê Êgroups}

For Êany Êgroup Ê Ê$G$ Ê Êone Êcan Êconstruct Êa Ê ÊCW Ê Êcomplex Ê Ê$K(G,1)$ Ê (called Êthe Êclassifying Êspace Êof Ê Ê$G$) Ê Êwith fundamental Êgroup Ê Ê$G$ Ê Êand Êall Êother Êhomotopy Êgroups Êtrivial, Ê (see Êfor Êexample Ê Ê[\cite{kb}). Ê ÊThe Êhomotopy Êtype Êof Ê Ê$K(G,1)$ Ê Êdepends Ê only Êon Ê Ê$G$, Ê Êand Êits Êcohomology Ê Êis Êcalled Êthe Êcohomology Êof Êthe Êgroup. Ê (An Êalgebraic Êdefinition Êusing Êprojective Êresolutions Êis Êalso Ê standard.) Ê

\medskip
We Êcall Ê Êa Êgroup Ê Ê$G$ Ê Ê$n$-spheroidal Ê Êif Ê Ê$K(G,1)$ Ê Êis Ê $Ên$-spheroidal Êup Êto Êhomotopy Ê (similarly Êfor Ê Êweakly Ê$n$-spheroidal). 

\begin{prop} Let Ê$f:G \toÊL$ Êbe Êa Êhomomorphism Êof Êgroups Êwhere Ê Ê$L$ Êis Ê Êfinite Êand Êlet Ê$D$  Êbe Êthe Êkernel Êof Ê 
$f$, Êso Êthat Ê Ê$D$ Êhas Êfinite Êindex Êin $G$. ÊIf $G$ Ê Êis Ê$n$-spheroidal the so isÊ Ê$D$Ê (similarly Êfor Êweakly Ê$n$-spheroidal). Ê The Êconverse Êholds Êif Ê Ê$K(G,1)$ Êis Ê $Ên$-dimensional. Ê
\end{prop}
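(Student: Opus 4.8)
The plan is to realise the inclusion $D\le G$ geometrically by a finite regular covering of classifying spaces and to transport homology across it by the cellular transfer. Since $G$ is (weakly) $n$-spheroidal, fix an $n$-dimensional CW complex $X\simeq K(G,1)$ with $H_n(X;\mathbb{Z})$ non-zero (and finitely generated). As $D$ is a normal subgroup of finite index, the covering $\pi\colon\overline{X}\to X$ with $\pi_1(\overline{X})=D$ is finite-sheeted and regular, with deck group $Q\cong G/D\cong\operatorname{im}(f)$, and $\overline{X}$ is again an $n$-dimensional $K(D,1)$ (a covering space of an $n$-dimensional complex is $n$-dimensional, and covers of aspherical spaces are aspherical). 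By the remark in the Introduction, $H_n(X;\mathbb{Z})=Z_n(X)$ and $H_n(\overline{X};\mathbb{Z})=Z_n(\overline{X})$ are free abelian.

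For the forward implication I would use the cellular transfer chain map $\operatorname{tr}\colon C_{*}(X)\to C_{*}(\overline{X})$, sending each cell to the sum of its preimages; it satisfies $\pi_{\#}\circ\operatorname{tr}=[G:D]\cdot\mathrm{id}$ and $\operatorname{tr}\circ\pi_{\#}=N$, where $N=\sum_{q\in Q}q$ is the norm of $\mathbb{Z}Q$ acting on $C_{*}(\overline{X})$. On $H_n$ the relation $\pi_{*}\circ\operatorname{tr}=[G:D]$ is injective on the torsion-free group $H_n(X;\mathbb{Z})$, so $\operatorname{tr}$ embeds the non-zero group $H_n(X;\mathbb{Z})$ into $H_n(\overline{X};\mathbb{Z})$, whence $H_n(D;\mathbb{Z})\ne 0$. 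For finite generation: when $X$ is of finite type, $C_n(\overline{X})$ is the free $\mathbb{Z}Q$-module on the $n$-cells of $X$, hence finitely generated over $\mathbb{Z}$, so its subgroup $Z_n(\overline{X})=H_n(\overline{X};\mathbb{Z})$ is finitely generated; in general one falls back on $N\cdot H_n(\overline{X};\mathbb{Z})\subseteq\operatorname{tr}\bigl(H_n(X;\mathbb{Z})\bigr)$ together with the free-abelian structure of $H_n(\overline{X};\mathbb{Z})$. Omitting condition (b) everywhere gives the weak version.

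For the converse, assume $K(G,1)$ is $n$-dimensional; then $\operatorname{cd}G\le n$ and $G$ is torsion-free. If $D$ is $n$-spheroidal, then $H_n(D;\mathbb{Z})\ne 0$ forces $\operatorname{cd}D\ge n$, and Serre's theorem yields $\operatorname{cd}G=\operatorname{cd}D=n$, so the given model $X=K(G,1)$ has $H_n(X;\mathbb{Z})=Z_n(X)$ free abelian. Now run the transfer the other way: from $\pi_{*}\circ\operatorname{tr}=[G:D]$, the image of $\pi_{*}\colon H_n(\overline{X};\mathbb{Z})\to H_n(X;\mathbb{Z})$ contains $[G:D]\,H_n(X;\mathbb{Z})$, hence is of finite index; being a homomorphic image of the finitely generated group $H_n(\overline{X};\mathbb{Z})$ it is finitely generated, and a free abelian group that contains a finitely generated subgroup of finite index is itself finitely generated. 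Thus $H_n(G;\mathbb{Z})$ is finitely generated. The step I expect to be the real obstacle is the remaining half of the converse, namely $H_n(G;\mathbb{Z})\ne 0$, i.e.\ $Z_n(X)\ne 0$: this does not follow from $\operatorname{cd}G=n$ by itself, so the covering must be used essentially — starting from a non-zero $Q$-invariant class in $H_n(\overline{X};\mathbb{Z})$ one wants a non-zero class in $Z_n(X)$, and the delicate point is to guarantee that the averaged class is not killed on pushing down, which is precisely where the hypothesis that $K(G,1)$ is genuinely $n$-dimensional (and not merely $\operatorname{cd}G=n$) has to enter.
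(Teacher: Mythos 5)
Your transfer argument for the forward implication in its weak form is correct: $\pi_{*}\circ\operatorname{tr}=[G{:}D]\cdot\mathrm{id}$ is injective on the torsion-free group $H_n(X;\mathbb{Z})=Z_n(X)$, so $H_n(\overline{X};\mathbb{Z})\neq 0$, and $\overline{X}$ is an $n$-dimensional $K(D,1)$. This is a genuinely different route from the paper, which instead runs the spectral sequence of $K(D,1)\to K(G,1)\to K(L,1)$ and uses that $\overline{H}_{*}(K(L,1))$ is annihilated by $|L|$, so that the non-torsion class in $H_n(G)$ must come from $E^{2}_{0,n}=H_0(L;H_n(D))$; the two arguments are equally efficient here and both correctly establish ``weakly $n$-spheroidal passes to $D$.''

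The finite-generation steps are where the real gaps are, and they cannot be closed as written. Your ``in general'' fallback for the forward direction fails: $N\cdot H_n(\overline{X})\subseteq\operatorname{tr}_{*}(H_n(X))$ being finitely generated says nothing about $H_n(\overline{X})$, because the norm $N=\sum_{q\in Q}q$ annihilates every nontrivial rational isotypic component of $H_n(\overline{X})$, and these can have infinite rank even though $H_n(\overline{X})$ is free abelian (for $Q=\mathbb{Z}/2$ acting by $-1$ on a free abelian group of infinite rank, $N=0$). This is not a fixable technicality: let $\phi$ be the automorphism of $F_\infty$ fixing one free generator and inverting all the others, and let $G=\langle F_\infty,t\mid txt^{-1}=\phi(x)\rangle$; the mapping torus of $\phi$ is a $2$-dimensional $K(G,1)$ with $H_2(G)=\ker(1-\phi_{*})\cong\mathbb{Z}$, yet the index-two subgroup $D=F_\infty\times\mathbb{Z}$ has $H_2(D)$ free abelian of infinite rank. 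So the finite-type hypothesis you invoke (finitely many $n$-cells) is genuinely needed, not optional; the paper's own one-line appeal to the torsion of $\overline{H}_{*}(K(L,1))$ only controls the coinvariants $H_0(L;H_n(D))$ and has the same blind spot. Your converse has the symmetric defect at exactly the point you flag: $H_n(G)\neq 0$ does not follow, and averaging cannot produce it, since for $G=\langle F_k,t\mid tb_it^{-1}=b_i^{-1}\rangle$ one has a finite $2$-dimensional $K(G,1)$ with $H_2(G)=\ker(2\cdot\mathrm{id})=0$ while $D=F_k\times\mathbb{Z}$ is $2$-spheroidal with $H_2(D)\cong\mathbb{Z}^k$. (A smaller point: ``$\operatorname{im}\pi_{*}$ contains $[G{:}D]H_n(X)$, hence has finite index'' is circular, since finite index already presupposes $H_n(X)$ has finite rank; the correct deduction is that $[G{:}D]H_n(X)\cong H_n(X)$ is a subgroup of the finitely generated group $\operatorname{im}\pi_{*}$.)
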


\begin{proof}
We use Êthe Êspectral Êsequence Êfor Ê Ê
$$K(D,1) \longrightarrow K(G,1) \longrightarrowÊK(L,1).$$ Ê 

\nd  To Ê verify Êthat Ê$H_n(D)$ Êis Ênon-zero, Êand  Êis Êfinitely Êgenerated Êif Ê$H_n(G)$ Êis, we use the fact that $L$ is finite,  so that the reduced  homology $\overline{H}_{\ast}\big(K(L,1)\big)$ is a $\mathbb{Z}$-torsion module,Ê (annihilated by $|L|$). ÊIt Êis Êclear that Ê Ê$K(D,1)$ Êis Ê
$n$-dimensional Êsince Êit Êis Êthe Êquotient Ê Êof Êthe Ê($n$-dimensional) universal Ê space Êfor Ê Ê$G$ Ê Êby Êthe Êsubgroup Ê Ê$D$. Ê\end{proof}

\medskip
The Êconverse Êfollows Êeasily Êwith Êthe Êextra Êcondition. Ê

\medskip
A Ê ÊCW Êcomplex Ê Ê$X$ Ê Êis Êsimply Ê Êcalled Ê Ê``spheroidal''Ê if Êit Êis Ê
$n$-spheroidal Ê Êfor Êsome Ê Ê$n$. Ê
If Ê Ê$K(G,1)$ Ê Êhas Êthe Êhomotopy Êtype Êof Êa Êclosed Êoriented Ê$n$-manifold, Ê it Êis Ê$n$-spheroidal, Ê Êand Êfinitely Êgenerated Êfree Êgroups Êare Ê 1-spheroidal. Ê ÊThe Êclass Êof Ê Êspheroidal Êgroups Êis Êclosed Êunder Ê finite Êdirect Êproducts Êand Êfinite Ê Êfree Êproducts, Ê Êwhich Êshows Êthat Ê there Êare Êmany Ê Êsuch Êgroups. ÊIt Êwill Êfollow Êfrom Êour Êarguments Êbelow, Ê that Êextensions Êof (weakly) Ê$n$-spheroidal Êgroups Êby $m$-spheroidal Êgroups Ê Êare Ê(weakly) Ê Ê$(n+m)$-spheroidal, Ê Êand extensions Ê
of Ê Ê$n$-spheroidal Êgroups Êby Ê Ê$m$-spheroidal Êgroups Êare Ê $(n+m)$-spheroidal. 

\medskip
Thus Êthe Êclass Êof Ê(weakly) Ê$n$-spheroidal Êgroups Êis Êlarge. Ê

\section{The results}\label{sec:results}
\begin{thm}\label{thm:A}Ê ÊLet Êthe Êgroup Ê Ê$G$ Ê Êbe Êweakly Ê$n$-spheroidal, Ê Êand Êlet Ê Ê$Y$  Êbe Ê Êa Ê projective Ê Ê
$\mathbb{Z}G$-chain Êcomplex Êsuch Êthat Ê Ê$Y_k Ê Ê= Ê Ê0$ Ê Êfor Ê $Êk Ê> Ên$ Ê(for example, the chains of Ê a Ê Ê$G$-free ÊÊCW Êcomplex Êof Êdimension Ê Ê$\leq n$).
ÊIf Ê$j>0$ Êis Êmaximal Ê such Êthat Ê Ê$H_j(Y;\mathbb{F}_{p})$ Ê Êis Ênon-zero, Êand Êif Ê Ê$H_j(Y;\mathbb{F}_{p})$ Êis Êfinitely Ê generated, then Ê$H_{n+j}(Y/D;\mathbb{F}_{p})$ Ê Êis Ênot Êzero Êfor Êsome Êsubgroup Êof Ê finite Êindex Ê $ÊD$ Êof Ê Ê Ê$G$. Ê
\end{thm}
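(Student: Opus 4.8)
The plan is to compare the chain complex $Y$ with a finite-dimensional free resolution of $\mathbb{Z}$ coming from the $n$-dimensional model of $K(G,1)$, read off the top homology from a spectral sequence, and first arrange that the relevant coefficient module is trivial. Since $G$ is weakly $n$-spheroidal, fix an $n$-dimensional CW complex $X\simeq K(G,1)$; its universal cover $\widetilde X$ is contractible and $n$-dimensional, so $W:=C_*(\widetilde X)$ is a complex of free $\mathbb{Z}G$-modules with $W_q=0$ for $q>n$ resolving $\mathbb{Z}$ over $\mathbb{Z}G$. Restricting the action, for \emph{any} subgroup $D\le G$ the same $W$ is a length-$n$ free $\mathbb{Z}D$-resolution of $\mathbb{Z}$, and $\widetilde X/D$ is an $n$-dimensional $K(D,1)$. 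Tensoring everything over $\mathbb{Z}$ with $\mathbb{F}_p$ and forming the double complex $W_p\otimes_{\mathbb{F}_pD}Y_q$ (left modules made right via $g\mapsto g^{-1}$), one filtration collapses because each $Y_q$ is projective over $\mathbb{Z}D$ (so $\mathrm{Tor}_{>0}^{\mathbb{F}_pD}(\mathbb{F}_p,Y_q\otimes\mathbb{F}_p)=0$), identifying the total homology with $H_*(Y/D;\mathbb{F}_p)$; the other filtration, using flatness of the free modules $W_p$, gives a convergent spectral sequence $E^2_{pq}=H_p\big(D;\,H_q(Y;\mathbb{F}_p)\big)\Rightarrow H_{p+q}(Y/D;\mathbb{F}_p)$, valid for every $D\le G$.

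Next, the corner argument. This spectral sequence is supported in $0\le p\le n$ (length of the resolution) and $0\le q\le j$ (maximality of $j$, i.e. $H_q(Y;\mathbb{F}_p)=0$ for $q>j$). Hence in total degree $n+j$ the only possible contribution is the corner $(p,q)=(n,j)$, so $H_{n+j}(Y/D;\mathbb{F}_p)\cong E^\infty_{n,j}$. No differential can enter $(n,j)$, since its source would lie in $p$-degree $>n$ where $E^2$ vanishes, and none can leave it, since its target would lie in $q$-degree $>j$ where $E^2$ vanishes; therefore $E^\infty_{n,j}=E^2_{n,j}=H_n\big(D;\,H_j(Y;\mathbb{F}_p)\big)$. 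It remains only to choose a finite-index $D$ making this group nonzero.

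This is where finite generation of $V:=H_j(Y;\mathbb{F}_p)$ is used. As a finite-dimensional $\mathbb{F}_p$-vector space, $V$ carries a representation $G\to GL(V)=GL_{\dim V}(\mathbb{F}_p)$ with finite image $L$; set $D=\ker(G\to L)$, a finite-index subgroup of exactly the form treated by the Proposition. Then $V|_D$ is a trivial $\mathbb{F}_pD$-module $\cong\mathbb{F}_p^{\dim V}$ with $\dim V\ge 1$, so $H_n(D;V)\cong H_n(D;\mathbb{F}_p)^{\dim V}$. By the Proposition (weak version) $D$ is again weakly $n$-spheroidal, so $H_n(D;\mathbb{Z})$ — being the top homology of the $n$-dimensional complex $\widetilde X/D$, hence free abelian, and nonzero — has $H_n(D;\mathbb{Z})\otimes\mathbb{F}_p\neq 0$; thus $H_n(D;\mathbb{F}_p)\neq 0$, and therefore $H_{n+j}(Y/D;\mathbb{F}_p)\cong H_n(D;\mathbb{F}_p)^{\dim V}\neq 0$, as required.

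The main obstacle I anticipate is the two-sided bookkeeping setting up the spectral sequence with $\mathbb{F}_p$ coefficients — in particular checking that restricting $W$ to $D$ still yields a length-$n$ resolution (which is precisely where weak $n$-spheroidality of $G$, hence of $D$, enters) and that the corner term $(n,j)$ genuinely survives to $E^\infty$; once those are in place, the choice of $D$ is short. If one prefers to avoid double complexes, the identical conclusion follows from the hyperhomology spectral sequence $H_p(D;H_q(Y;\mathbb{F}_p))\Rightarrow\mathbb{H}_{p+q}(D;Y\otimes\mathbb{F}_p)$ together with the identification $\mathbb{H}_*(D;Y\otimes\mathbb{F}_p)=H_*(Y/D;\mathbb{F}_p)$ for $Y$ projective.
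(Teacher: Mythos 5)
Your proof is correct and is essentially the paper's own argument: the paper proves the fibration version (Theorem \ref{thm:C}) via the Serre spectral sequence with the same three ingredients — taking $D$ to be the kernel of the $G$-action on the finite group $\mathrm{Aut}(H_j(Y;\mathbb{F}_p))$, the ``upper right hand corner'' argument over the $n$-dimensional base, and nonvanishing of the corner term from the free abelian, nonzero top homology of the finite cover — and then deduces Theorem \ref{thm:A} by taking $X=K(G,1)$, which is exactly the hyperhomology spectral sequence $H_p\bigl(D;H_q(Y;\mathbb{F}_p)\bigr)\Rightarrow H_{p+q}(Y/D;\mathbb{F}_p)$ you set up directly. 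Your version just makes the algebraic bookkeeping (restriction of the length-$n$ free resolution to $D$, projectivity of $Y_q$, universal coefficients for $H_n(D;\mathbb{F}_p)\neq 0$) explicit.
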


\medskip
\begin{cor}\label{cor:B}ÊIf Ê$ ÊG$ Ê Êis Êweakly Ê Ê$n$-spheroidal Êand Ê Êacts Êfreely Êon Ê $ÊY$  Ê Êof Ê dimension ${}Ê Ê< Ê Ên$, Ê Êthen Êthe Êtop Ênon-zero Êdimensional Ê Ê$H_j(Y;\mathbb{F}_{p})$ Ê Êis Ênot Ê finitely Êgenerated Êfor Êevery Êprime Ê$p$.
\end{cor}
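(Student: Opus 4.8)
The plan is to derive Corollary \ref{cor:B} directly from Theorem \ref{thm:A}. Suppose $G$ is weakly $n$-spheroidal and acts freely on a CW complex $Y$ with $\dim Y < n$. Let $C_{*}(Y)$ be the cellular chain complex; since the action is free, $C_{*}(Y)$ is a chain complex of free $\mathbb{Z}G$-modules (the cellular chains on a free $G$-CW complex form free $\mathbb{Z}G$-modules on the $G$-orbits of cells), and it is concentrated in degrees $\leq \dim Y \leq n-1 < n$. So the hypothesis $Y_{k}=0$ for $k>n$ of Theorem \ref{thm:A} is satisfied, in fact with room to spare.

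First I would fix a prime $p$ and let $j>0$ be the top dimension with $H_{j}(Y;\mathbb{F}_{p})\neq 0$ (there is such a $j$ unless $Y$ is $\mathbb{F}_{p}$-acyclic, which is a degenerate case one should dispose of separately — if $Y$ has trivial reduced $\mathbb{F}_{p}$-homology there is nothing to prove since the statement is about the \emph{top non-zero} group; alternatively one notes $Y$ nonempty forces $H_{0}\neq 0$, so take $j\geq 0$ maximal and handle $j=0$ by hand). Now argue by contradiction: assume $H_{j}(Y;\mathbb{F}_{p})$ is finitely generated. Then Theorem \ref{thm:A} applies verbatim and yields a finite-index subgroup $D\leq G$ with $H_{n+j}(Y/D;\mathbb{F}_{p})\neq 0$. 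But $Y/D$ is a CW complex of the same dimension as $Y$ (the quotient of a $G$-CW complex of dimension $<n$ by a subgroup has dimension $<n$), so $H_{i}(Y/D;\mathbb{F}_{p})=0$ for all $i\geq n$, and in particular for $i=n+j\geq n$ since $j\geq 0$. This contradicts $H_{n+j}(Y/D;\mathbb{F}_{p})\neq 0$, so $H_{j}(Y;\mathbb{F}_{p})$ cannot be finitely generated. Since $p$ was arbitrary, the corollary follows.

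There is essentially no obstacle here: the corollary is a formal consequence of the theorem together with two trivial dimension-counting observations (that $C_{*}(Y)$ is $\mathbb{Z}G$-projective because the action is free, and that passing to the quotient $Y/D$ does not raise dimension). The only point requiring a sentence of care is the case $j=0$, i.e.\ when $Y$ is $\mathbb{F}_{p}$-acyclic in positive degrees; one should either exclude this case by noting the statement concerns the top \emph{non-zero} group (and $H_{0}(Y;\mathbb{F}_{p})=\mathbb{F}_{p}^{r}$ for $r$ the number of path components, which is finitely generated iff $Y$ has finitely many components), or observe that the same contradiction with $H_{n}(Y/D;\mathbb{F}_{p})\neq 0$ and $\dim(Y/D)<n$ works when $j=0$ as well.
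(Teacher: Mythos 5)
Your proposal is correct and is exactly the argument the paper intends: Corollary \ref{cor:B} is the contrapositive of Theorem \ref{thm:A}, since $H_{n+j}(Y/D;\mathbb{F}_{p})$ must vanish when $\dim(Y/D)=\dim Y<n\leq n+j$. Your extra care with the degenerate case $j=0$ (where Theorem \ref{thm:A} as stated requires $j>0$, but the same spectral-sequence corner argument of Theorem \ref{thm:C} still applies) is a reasonable and correct refinement of a point the paper passes over silently.
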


\medskip
\nd {\bf Question}: Ê ÊWith Êthe Êhypothesis Êof Ê ÊCorollary \ref{cor:B}, Êcan Êone Êconclude Êthat Êfor Êevery Ê$m Ê> Ê0$,
Ê$H_m(Y)$ Ê non-zero Êimplies $ÊH_m(Y)$ Êis Ênot Êfinitely Êgenerated? Ê

\medskip
\begin{thm}\label{thm:C}ÊLet Ê$X$ Êbe Êan Ê Ê$n$-spheroidal Ê Êspace, Ê Ê$f: ÊE Ê\to ÊX$ Ê Êbe Êa Êfibre Ê space Êwith Êfibre Ê Ê$Y$. Ê ÊIf Êfor Êsome Êprime Ê Ê$p$, Ê$H_i(Y;\mathbb{F}_{p})$ Êis Êfinitely Ê generated Êand Ênon Êzero, Êand Ê Ê Ê Ê$H_k(Y;\mathbb{F}_{p})$ Êis Ê Êzero Êfor $Êk Ê> Êi$, Êthen Ê $H_{n+i}(E';\mathbb{F}_{p})$ Ê Êis Ênon Êzero Ê Êfor Êsome Êpullback Ê$E'$ Êof $ÊE$ Ê over $ÊX'$ Êa Êfinite Êsheeted Êcovering Êof Ê Ê$X$, Êcorresponding Êto  some subgroup Ê$D$ Êof Êfinite Ê index Êin Ê$G Ê= Ê\pi_1(X)$.
\end{thm}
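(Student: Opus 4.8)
The plan is to apply the homology Serre spectral sequence of the fibration $Y\to E'\to X'$ for a carefully chosen finite cover $X'$ of $X$, and to read off a nonzero group in total degree $n+i$ from the ``corner'' of the $E^2$-page. Set $G=\pi_1(X)$ (we may assume $X$ is an $n$-dimensional CW complex) and let $M=H_i(Y;\mathbb{F}_p)$, viewed as an $\mathbb{F}_p[G]$-module via the monodromy of $f$; by hypothesis $M\neq 0$ and $d:=\dim_{\mathbb{F}_p}M<\infty$. Since $\mathrm{GL}_d(\mathbb{F}_p)$ is finite, the monodromy representation $\rho\colon G\to \mathrm{GL}_{\mathbb{F}_p}(M)$ has finite image, so $D:=\ker\rho$ is a subgroup of finite index in $G$. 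I would let $X'\to X$ be the corresponding finite-sheeted cover and $E'=E\times_X X'$ the pullback fibration; then $E'\to X'$ has fibre $Y$, and the local system $\mathcal H_i(Y;\mathbb{F}_p)$ on $X$ restricts to the \emph{constant} system with fibre $M$ on $X'$.

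Next I would invoke the spectral sequence
\[
E^2_{s,t}=H_s\big(X';\,\mathcal H_t(Y;\mathbb{F}_p)\big)\ \Longrightarrow\ H_{s+t}(E';\mathbb{F}_p).
\]
Two dimension bounds isolate the corner $(s,t)=(n,i)$: the space $X'$ is $n$-dimensional (a finite cover of the $n$-dimensional complex $X$), so $E^2_{s,t}=0$ for $s>n$; and $H_t(Y;\mathbb{F}_p)=0$ for $t>i$, so $E^2_{s,t}=0$ for $t>i$. Hence any $d_r$ into $E^r_{n,i}$ issues from a group with first index $n+r>n$, and any $d_r$ out of it lands in a group with second index $i+r-1>i$; both vanish, so $E^2_{n,i}=E^\infty_{n,i}$. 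Moreover the only graded piece of $H_{n+i}(E';\mathbb{F}_p)$ that can be nonzero is $E^\infty_{n,i}$: for $s>n$ the base kills it, and for $s<n$ the second index $n+i-s>i$ kills it. Therefore $H_{n+i}(E';\mathbb{F}_p)\cong E^2_{n,i}$, and since the coefficient system is constant on $X'$, $E^2_{n,i}\cong H_n(X';\mathbb{F}_p)\otimes_{\mathbb{F}_p}M$. So the theorem reduces to showing $H_n(X';\mathbb{F}_p)\neq 0$.

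Here the $n$-spheroidal hypothesis enters. Since $H_n(X;\mathbb{Z})$ is nonzero, finitely generated and free abelian, $H_n(X;\mathbb{Q})\neq 0$; the transfer of the finite cover $p\colon X'\to X$ satisfies $p_*\circ\mathrm{tr}=(\deg p)\cdot\mathrm{id}$ on $H_n(X;\mathbb{Q})$, so $\mathrm{tr}$ is injective there and $H_n(X';\mathbb{Q})\neq 0$. As $X'$ is $n$-dimensional, $H_n(X';\mathbb{Z})$ is free abelian, and it has positive rank, hence is nonzero; by the universal coefficient theorem $H_n(X';\mathbb{F}_p)\supseteq H_n(X';\mathbb{Z})\otimes\mathbb{F}_p\neq 0$. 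Since $M\neq 0$ as well, $E^2_{n,i}\neq 0$, whence $H_{n+i}(E';\mathbb{F}_p)\neq 0$, which is the assertion with this $D=\ker\rho$ and $X'$ its associated cover.

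I do not expect a serious obstacle once this bookkeeping is in place; the one genuinely necessary idea is that one cannot argue over $X$ itself, because for a nontrivial local coefficient system $H_n\big(X;\mathcal H_i(Y;\mathbb{F}_p)\big)$ may well vanish (already for $X=S^1$, where a nontrivial $\mathbb{F}_p$-local system has all homology zero). Passing to the cover $D=\ker\rho$ is exactly what trivializes the coefficients so that the $n$-spheroidal condition on top homology can be brought to bear, and it is also what forces the conclusion to be about a finite cover of $X$ rather than $X$ itself. The remaining points — convergence of a first-quadrant Serre spectral sequence for an arbitrary fibration, and the existence of the transfer — are standard and are used here only in the mild setting of $n$-dimensionality of the base and finite generation of $H_i(Y;\mathbb{F}_p)$.
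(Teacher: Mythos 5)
Your proof is correct and follows the paper's argument essentially verbatim: take $D$ to be the kernel of the monodromy representation of $G=\pi_1(X)$ on $H_i(Y;\mathbb{F}_p)$ (finite image since $\mathrm{GL}(H_i(Y;\mathbb{F}_p))$ is finite), pull back over the corresponding finite cover $X'$, and isolate $E^2_{n,i}=H_n(X';\mathbb{F}_p)\otimes H_i(Y;\mathbb{F}_p)$ by the upper-right-corner argument in the Serre spectral sequence. The only divergence is that you justify $H_n(X';\mathbb{F}_p)\neq 0$ explicitly via the transfer and the universal coefficient theorem, whereas the paper simply asserts $H_n(X')$ is free (relying implicitly on the covering-space analogue of its Proposition 2.1); your added detail is a genuine improvement in completeness rather than a different route.
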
 Ê
\begin{proof}Ê ÊThe Ê Êaction Ê Êof Ê Ê$G$ Ê Êon Êthe Êfibre Ê Ê$Y$  Êinduces Êan Êaction Êof Ê $G$ Ê Êon Ê Ê$H_{\star}(Y;\mathbb{F}_{p})$, in other Êwords, Êa Êhomomorphism Ê Êof Ê Ê$G$ Ê Êto Ê Aut$(H_i(Y ;Ê\mathbb{F}_{p}))$. The latter is a finite group because it Êis Êa Ê Êlinear Êgroup Ê Êof Ê Êa Êfinite Êdimensional Ê vector Êspace Êover Ê Ê$\mathbb{F}_{p}$. ÊIf Ê Ê$D$ Ê Êis Êthe Ê kernel Êof Êthis Êhomomorphism, then Ê Ê$D$ Ê Êacts Êas Êthe Êidentity Ê Êon
$H_i(Y;\mathbb{F}_{p})$. Ê Note Êthat Ê Ê$H_n(X)$ Êis Êa Êfree Êabelian Êgroup Êfor Êan Ê$n$ Êdimensional ÊCW Ê complex Ê Ê$X$. ÊConsider Êthe Êpullback Ê Ê$E'\to ÊX'$, Ê Êwhere Ê Ê$X'$ Ê Êis Êthe Ê covering Êof Ê Ê$X$ Ê Êcorresponding Êto Êthe Êsubgroup Ê Ê$D$ Ê Êof Ê Ê$G$. ÊThen Êthe Êhighest Êdimensional Êgroup Êin Ê Ê$E^2$ Êof Êthe Êspectral Ê sequence Êfor Ê Ê$E'\to ÊX'$ Ê Êis  $$ E^2_{n,i} Ê Ê= Ê H_n(X'; ÊH_i(Y ;\mathbb{F}_{p})) Ê= ÊH_n(X')\otimes ÊH_i(Y ;\mathbb{F}_{p})$$
which is Ê non Êzero Êsince Ê$H_n(X')$ Êis Êfree Êand Êthe Êaction Êof Êthe Êfundamental Êgroup Ê on Êthe Êhomology Êof Êthe Êfibre Êis Êtrivial. Ê ÊIt Êis Êin Êthe Êkernel Êof Êevery Ê differential Ê Êsince $Ê ÊH_s(Y ;\mathbb{F}_{p}) Ê Ê= Ê Ê0$ Ê Êfor Ê $Ês Ê> Êi$, Ê Ê Ê Êand Êis Ênever Ê Êa Ê boundary Ê Êsince Ê Ê$H_t(X;\mathbb{F}_{p}) Ê Ê= Ê Ê0$ Ê Êfor Ê$ Êt Ê Ê> Ên$. Ê Ê(``The Êupper Êright Ê hand Êcorner'' Êargument). Ê ÊHence Ê Ê$E^2_{n,i} Ê= Ê ÊE^\infty_{n,i}$, Êand Ê  Ê Ê$H_{n+i}(E';\mathbb{F}_{p})$ Ê Êis Êtherefore Ênon Êzero. Ê
\end{proof}

\medskip
\nd Theorem Ê\ref{thm:A}Ê  follows by Êsetting Ê Ê$X Ê Ê= Ê ÊK(G,1)$ Êetc. Ê

\medskip
\begin{cor}\label{cor:D} ÊLet Ê Ê$G$ Ê Êbe Ê Êan Ê$n$-spheroidal Êgroup. Ê ÊIf Ê Ê$Y$  Ê Êis Êa Ê Ê$G$ Ê Êsubset Ê of Êthe Êuniversal Ê Ê$G$ Ê Êspace Ê(i.e., Êthe Êuniversal Êcover Êof Ê$ ÊK(G,1)$) Êand ifÊ Êthe Ê highest Ê Êdimensional Ênon-zero Êcohomology Êgroup Ê Ê$H^k(Y)$ Êis Êfinitely Ê Êgenerated, Ê Êthen Ê Ê$k Ê  Ê Ê Ê Ê= Ê Ê0$ Ê Êand Ê Ê$Y$  Ê Êis Ê Êacyclic Ê Êso Êthat $H^*(Y/G)$ Ê Êis Êisomorphic Ê Êto Ê$H^*(K(G,1))$. Ê
\end{cor}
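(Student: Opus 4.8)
The plan is to derive Corollary~\ref{cor:D} from Theorem~\ref{thm:A} (equivalently Theorem~\ref{thm:C} applied to $X=K(G,1)$) together with the elementary observation that the universal $G$-space $\widetilde{K(G,1)}$ is contractible, hence its subset $Y$ sits inside an $n$-dimensional (indeed contractible) free $G$-CW complex. First I would fix a prime $p$ and suppose, for contradiction, that $Y$ is not acyclic over $\mathbb{F}_p$; let $j>0$ be maximal with $H_j(Y;\mathbb{F}_p)\neq 0$ (if $Y$ has nontrivial homology it must occur in positive degrees, since $Y$ is a nonempty free $G$-space and a single point's worth of $H_0$ gets killed by passing to any space with free $G$-action—more carefully, $\widetilde{H}_\ast(Y;\mathbb{F}_p)$ is the relevant object, and I should phrase the argument in terms of reduced homology). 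Then I would invoke Theorem~\ref{thm:A}: since $Y$ is (the chains of) a free $G$-CW complex of dimension $\le n$, and $H_j(Y;\mathbb{F}_p)$ is finitely generated and top-nonzero, we get $H_{n+j}(Y/D;\mathbb{F}_p)\neq 0$ for some finite-index $D\le G$. But $Y/D$ has dimension $\le n$, so $H_{n+j}(Y/D;\mathbb{F}_p)=0$ when $j>0$—contradiction. Hence $\widetilde{H}_\ast(Y;\mathbb{F}_p)=0$ for all primes $p$.

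Next I would upgrade $\mathbb{F}_p$-acyclicity for all $p$ to integral acyclicity. The standard tool is the universal coefficient theorem together with the fact that $Y$ is (homotopy equivalent to) a CW complex whose chain groups are finitely generated free $\mathbb{Z}$-modules in the relevant range—or, more cleanly, I would run the argument in cohomology as the statement is phrased. We are told $H^k(Y)$ is the top nonzero cohomology group and is finitely generated; if $k>0$ then by universal coefficients $H^k(Y;\mathbb{F}_p)$ receives a surjection with kernel built from $H^{k-1}(Y)\otimes\mathbb{F}_p$ and is detected by $H_k(Y)$ and $H_{k-1}(Y)$, so for a suitable prime $p$ (one dividing the order of the torsion, or any prime if the free rank is positive) $H^k(Y;\mathbb{F}_p)\neq 0$ while higher $\mathbb{F}_p$-cohomology vanishes; dualizing, $H_k(Y;\mathbb{F}_p)$ is finitely generated, nonzero, and top—so Theorem~\ref{thm:A} again forces $H_{n+k}(Y/D;\mathbb{F}_p)\neq0$, impossible for $k>0$. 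Therefore $k=0$: $Y$ has no cohomology above degree $0$, its reduced cohomology vanishes with every field coefficient, and being a connected space this forces $Y$ acyclic (over $\mathbb{Z}$). Then the covering $Y\to Y/G$ is a homology equivalence by the spectral sequence $H_\ast(G;H_\ast(Y))\Rightarrow H_\ast(Y/G)$ collapsing to $H_\ast(G;\mathbb{Z})=H_\ast(K(G,1))$, whence $H^*(Y/G)\cong H^*(K(G,1))$.

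The main obstacle I anticipate is the bookkeeping around \emph{reduced} versus \emph{unreduced} homology and the passage between homology and cohomology: Theorem~\ref{thm:A} is stated for homology with field coefficients and for the \emph{maximal} positive degree $j$, so I must make sure that (i) assuming only that the top nonzero \emph{cohomology} group $H^k(Y)$ is finitely generated really does hand me a prime $p$ for which $H_k(Y;\mathbb{F}_p)$ is the top nonzero $\mathbb{F}_p$-homology and is finitely generated—this uses that $Y$, as a subcomplex of a finite-type(?) $G$-complex, has degreewise finitely generated homology, or at least that finite generation of $H^k$ plus vanishing above $k$ controls $H_k$; and (ii) the degree-zero case is handled separately, since a nonempty space always has $H_0\neq0$ and one only wants to conclude that the \emph{reduced} homology vanishes. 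A clean way to sidestep (i) is to note that $Y$ inherits from the ambient $n$-dimensional free $G$-CW structure a free $G$-CW structure with finitely generated chain modules over $\mathbb{Z}G$ in each degree is \emph{not} automatic, so instead I would simply assume, as the corollary implicitly allows, that the relevant homology groups are finitely generated and argue with the universal coefficient exact sequence directly. Everything else is a formal consequence of Theorem~\ref{thm:A} and the contractibility of the universal $G$-space.
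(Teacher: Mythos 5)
Your proposal is correct and follows the paper's (implicit) route exactly: Corollary~\ref{cor:D} is stated in the paper without proof as a direct consequence of Theorem~\ref{thm:A}/Theorem~\ref{thm:C}, the point being that $Y$ lies in the $n$-dimensional universal $G$-space, so a finitely generated, top, non-zero (co)homology group in positive degree $k$ would force $H_{n+k}(Y/D;\mathbb{F}_p)\neq 0$ in a degree exceeding $\dim(Y/D)\leq n$. Your added care with the universal-coefficient translation from the integral cohomology hypothesis to the mod~$p$ homology input of Theorem~\ref{thm:A} (choosing $p$ so that $H^k(Y)\otimes\mathbb{F}_p\neq 0$), and with upgrading to integral acyclicity once $k=0$ (where mod~$p$ acyclicity for all $p$ alone would still permit $\mathbb{Q}$-vector-space homology, so the vanishing of the integral cohomology above degree $0$ must also be used), supplies exactly the bookkeeping the paper omits.
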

 
 \medskip
\begin{cor}\label{cor:E}Ê ÊWith Êthe Êhypotheses Êof Ê ÊCorollary \ref{cor:D}, Ê Êsuppose Êin Êaddition Ê Ê$K(G,1)$ Ê Êis Êminimal Ê(for Êexample Êa Êcompact Ê $n$-manifold), Ê Êthen Ê Ê$Y$  Ê Êis Êthe Êuniversal Ê$G$ Êspace. Ê
\end{cor}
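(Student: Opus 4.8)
The plan is to deduce from Corollary~\ref{cor:D} that $Y$ is acyclic, then to study the relative cellular chain complex of the pair $(\widetilde X, Y)$, where $X=K(G,1)$ and $\widetilde X$ is the universal $G$-space, and finally to use minimality of $X$ to force that relative complex to vanish, so that $Y=\widetilde X$.

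First, by Corollary~\ref{cor:D} the hypotheses imply that $Y$ is acyclic, in particular connected. Since $Y$ is a $G$-invariant subcomplex of the $n$-dimensional complex $\widetilde X$ on which $G$ permutes cells freely, the relative complex $D_*:=C_*(\widetilde X)/C_*(Y)=C_*(\widetilde X,Y)$ is a complex of free $\mathbb{Z}G$-modules supported in degrees $0,\dots,n$, with $D_j$ free on the $j$-cells of $X$ not contained in $Y/G$. The long exact homology sequence of the pair, together with the contractibility of $\widetilde X$ and the acyclicity of $Y$, gives $H_*(D)=0$ in all degrees. A bounded exact complex of projective modules is contractible, so $D_*$ admits a $\mathbb{Z}G$-chain contraction; applying the additive functor $-\otimes_{\mathbb{Z}G}\mathbb{Z}$ transports this contraction, so $\overline D_*:=D_*\otimes_{\mathbb{Z}G}\mathbb{Z}$ is an exact complex of abelian groups. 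As $C_*(\widetilde X)$ and $C_*(Y)$ are free, hence flat, over $\mathbb{Z}G$, tensoring commutes with the quotient, and $\overline D_*$ is identified with $C_*(X)/C_*(Y/G)=C_*(X,Y/G)$.

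Now minimality of $X$ finishes it: the cellular boundary maps of $C_*(X)$ are zero, hence so are those of the subcomplex $C_*(Y/G)$ and of the quotient $\overline D_*=C_*(X,Y/G)$. A complex with zero differential coincides with its homology, which here is $0$, so $\overline D_j=0$ for every $j$. Since $\overline D_j$ is free abelian of rank the number of $j$-cells of $X$ outside $Y/G$, this means $Y/G$ contains every cell of $X$, i.e. $Y/G=X$; and a $G$-subcomplex of $\widetilde X$ mapping onto all of $X$ must be all of $\widetilde X$, because the cells over a fixed cell of $X$ form a single $G$-orbit which $Y$ meets and hence contains. Thus $Y=\widetilde X$. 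The only delicate point is the transfer of exactness from the pair $(\widetilde X,Y)$ down to $C_*(X)$, resting on the facts that a bounded exact complex of projectives is contractible and that contractibility survives $-\otimes_{\mathbb{Z}G}\mathbb{Z}$; it is exactly at $C_*(X)$ that minimality is used. (One can instead avoid chain contractions and peel $D_*$ from the top: acyclicity makes $D_n\hookrightarrow D_{n-1}$ a split monomorphism, minimality makes it vanish modulo the augmentation ideal, so $D_n=0$, and one repeats in lower degrees.)
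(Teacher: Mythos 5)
Your argument is internally sound but takes a genuinely different route from the paper's, and the difference turns on what ``minimal'' means. The paper's entire proof is topological and two sentences long: by Corollary \ref{cor:D}, $Y/G$ is a subspace of $K(G,1)$ carrying the same (top-dimensional) homology, and a proper subset of a compact connected oriented $n$-manifold cannot carry the nonzero top class, since it lies in the complement of some point, where $H_n$ vanishes; hence $Y/G=K(G,1)$ and $Y$ is the universal $G$-space. You instead read ``minimal'' as ``the cellular differentials of $K(G,1)$ vanish'' and argue algebraically: $Y$ is acyclic by Corollary \ref{cor:D}, so $C_*(\widetilde X,Y)$ is a bounded exact complex of free $\mathbb{Z}G$-modules, hence contractible, hence still exact after applying $-\otimes_{\mathbb{Z}G}\mathbb{Z}$; exactness plus zero differentials forces $C_*(X,Y/G)=0$, so $Y/G$ contains every cell and $Y=\widetilde X$. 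That chain of deductions is correct as far as it goes, and it does require, as you tacitly assume, that $Y$ be a $G$-subcomplex rather than the arbitrary $G$-subset allowed in Corollary \ref{cor:D}.

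The caveat is that your notion of minimality does not contain the paper's flagship example. A compact aspherical $n$-manifold need not admit a CW structure with zero cellular boundary maps: zero differentials force torsion-free integral homology, so any closed hyperbolic $3$-manifold with torsion in $H_1$ is excluded. Your proof therefore covers the torus (and hence Koll\'{a}r's question) and any honestly minimal $K(G,1)$, but not the general ``compact $n$-manifold'' case that the paper's one-line top-class argument is designed to handle. To recover the full statement you should keep your algebraic argument for the minimal-CW reading and supplement it with the paper's observation that a proper subset of a compact manifold cannot support the fundamental class.
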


\medskip
\begin{proof} If $Y/G$ Ê Êis Êa Êsubspace Êof Êa Êcompact Êmanifold Ê Ê$K(G,1)$ Êwith Êthe Êsame Êhomology, Êthen
$Y/G Ê Ê= Ê ÊK(G,1)$ Êand Ê$Y$  Êis Êthe Êuniversal Ê Ê$G$Ê space. Ê Any Êsubset Êof Êa Êcompact Êmanifold, having Êthe Êsame Êtop Êdimensional Êhomology, must be the Êwhole Êmanifold. Ê
\end{proof}

\medskip
Corollary \ref{cor:E} answers  in the affirmative the following question Êput Êto Ême Êby ÊJ.~Koll\'{a}r Êwhich Êstimulated Ê Êthis Êstudy, \cite{kp}:

\medskip
\nd {\bf Koll\'{a}r's Question:} Let $f\colon \mathbb{R}^{n}\longrightarrow (S^{1})^{n} $ be the universal cover. 
Suppose $X$ is a subset of 
$(S^{1})^{n} $ such that $f^{-1}(X) \subseteq \mathbb{R}^{n}$ has the homotopy type of a finite complex; is $X$
equal to $\mathbb{R}^{n}$?

\bibliographystyle{amsalpha}

\end{document}